\newsavebox{\@brx}
\newcommand{\llangle}[1][]{\savebox{\@brx}{\(\m@th{#1\langle}\)}%
  \mathopen{\copy\@brx\kern-0.5\wd\@brx\usebox{\@brx}}}
\newcommand{\rrangle}[1][]{\savebox{\@brx}{\(\m@th{#1\rangle}\)}%
  \mathclose{\copy\@brx\kern-0.5\wd\@brx\usebox{\@brx}}}
\DeclareFontFamily{OML}{rsfs}{\skewchar\font'177}
\DeclareFontShape{OML}{rsfs}{m}{n}{ <5> <6> rsfs5 <7> <8> <9> rsfs7
  <10> <10.95> <12> <14.4> <17.28> <20.74> <24.88> rsfs10 }{}
\DeclareMathAlphabet{\mathfs}{OML}{rsfs}{m}{n}
\newtheorem{theorem}{Theorem}
\newtheorem*{thm-homo-class}{Main Theorem}
\newtheorem{lemma}[theorem]{Lemma}
\newtheorem{proposition}[theorem]{Proposition}
\theoremstyle{definition}
\theoremstyle{remark}
\newtheorem{remark}[theorem]{\bf Remark}
\numberwithin{equation}{section}
\numberwithin{theorem}{section}
\newcommand{\intav}[1]{\mathchoice {\mathop{\vrule width 6pt height 3 pt depth  -2.5pt
\kern -8pt \intop}\nolimits_{\kern -6pt#1}} {\mathop{\vrule width
5pt height 3  pt depth -2.6pt \kern -6pt \intop}\nolimits_{#1}}
{\mathop{\vrule width 5pt height 3 pt depth -2.6pt \kern -6pt
\intop}\nolimits_{#1}} {\mathop{\vrule width 5pt height 3 pt depth
-2.6pt \kern -6pt \intop}\nolimits_{#1}}}
\newcommand{\intavl}[1]{\mathchoice {\mathop{\vrule width 6pt height 3 pt depth  -2.5pt
\kern -8pt \intop}\limits_{\kern -6pt#1}} {\mathop{\vrule width 5pt
height 3  pt depth -2.6pt \kern -6pt \intop}\nolimits_{#1}}
{\mathop{\vrule width 5pt height 3 pt depth -2.6pt \kern -6pt
\intop}\nolimits_{#1}} {\mathop{\vrule width 5pt height 3 pt depth
-2.6pt \kern -6pt \intop}\nolimits_{#1}}}
\newcommand{\vertiii}[1]{{\left\vert\kern-0.2ex\left\vert\kern-0.2ex\left\vert #1 
    \right\vert\kern-0.2ex\right\vert\kern-0.2ex\right\vert}}
\newcommand{\un}{\underline}
\newcommand{\ve}{\varepsilon}
\newcommand{\wh}{\widehat}
\newcommand{\vf}{\varphi}
\newcommand{\R}{\mathbb{R}}
\newcommand{\Z}{\mathbb{Z}}
\begin{document}

\title[Homoclinic classes on rank 1 manifolds]{Homoclinic classes of geodesic\\ flows on rank 1 manifolds}

\author{Yuri Lima and Mauricio Poletti}
\address{Departamento de Matem\'atica, Universidade Federal do Cear\'a (UFC), Campus do Pici,
Bloco 914, CEP 60455-760. Fortaleza -- CE, Brasil}
\email{yurilima@gmail.com}
\address{Departamento de Matem\'atica, Universidade Federal do Cear\'a (UFC), Campus do Pici,
Bloco 914, CEP 60455-760. Fortaleza -- CE, Brasil}
\email{mpoletti@mat.ufc.br}

\date{\today}
\keywords{geodesic flow, Markov partition, Pesin theory, symbolic dynamics}
\subjclass[2020]{37B10, 37C05, 37C83, 37D25, 37D35}
\thanks{We thank Katrin Gelfert, Davi Obata, Sergio Romaña, and the anonymous referee for valuable suggestions.
MP was supported by CNPq and Instituto Serrapilheira, grant number Serra-R-2211-41879.
YL was supported by CNPq and Instituto Serrapilheira, grant 
``Jangada Din\^{a}mica: Impulsionando Sistemas Din\^{a}micos na Regi\~{a}o Nordeste''.}

\begin{abstract}
Given a $C^{1+\beta}$ flow $\vf$ with positive speed on a closed smooth Riemannian manifold, we
code two homoclinically related $\vf$--invariant probabilities by an irreducible countable
topological Markov flow. As an application, we give
a proof using symbolic dynamics of the theorem of Knieper on the uniqueness
of the measure of maximal entropy \cite{Knieper-Rank-One-Entropy}
and theorems of Burns et al on the uniqueness of equilibrium states \cite{BCFT-2018}.
\end{abstract}

\maketitle

%\tableofcontents

\section{Introduction}

Homoclinic classes are an efficient way of decomposing the dynamics of hyperbolic systems. Introduced by Newhouse in \cite{Newhouse-hyperbolic}
as generalizations of the basic sets considered by Smale in his spectral decomposition theorem 
\cite{Smale-differentiable}, homoclinic classes have been used in settings beyond uniform hyperbolicity, see e.g. \cite{Bonatti-Crovisier-Inventiones}.
%for profound results for $C^1$--generic diffeomorphisms.

Rodriguez Hertz et al introduced ergodic homoclinic classes of hyperbolic periodic
points, and studied the uniqueness of SRB measures for surface diffeomorphisms
\cite{HHTU-CMP}. Buzzi, Crovisier and Sarig introduced homoclinic classes of
measures and proved that the Markov partitions of
Sarig \cite{Sarig-JAMS} and Ben Ovadia \cite{Ben-Ovadia-2019} code
homoclinic classes by {\em irreducible} countable topological Markov shifts,
and each homoclinic class supports at most one equilibrium state
for each admissible potential, see \cite[Thm. 3.1, Cor. 3.3]{BCS-Annals}.
Buzzi, Crovisier and Lima studied homoclinic classes of measures
for 3--dimensional flows with positive speed \cite{BCL}.

We consider homoclinic classes for flows with positive speed in any dimension.
Our main result is the coding of two homoclinically related measures by an {\em irreducible}
topological Markov flow. Let $N$ be a $C^\infty$ closed Riemannian manifold.

\begin{thm-homo-class}\label{thm-coding-irreducible}
Let $X$ be a $C^{1+\beta}$ vector field with $X\neq 0$ everywhere, let
$\vf:N\to N$ be the flow generated by $X$, and let $\chi>0$. If $\mu_1,\mu_2$ are homoclinically related
$\chi$--hyperbolic probability measures, then there is an {\em irreducible} countable topological Markov flow
$(\Sigma_r,\sigma_r)$ and a H\"older continuous map $\pi_r:\Sigma_r\to N$ s.t.:
\begin{enumerate}[$(1)$]
\item $r:\Sigma\to\R^+$ is H\"older continuous and bounded away from zero and infinity.
\item $\pi_r\circ\sigma^t_r=\vf^t\circ\pi_r$ for all $t\in\R$.
\item $\pi_r[\Sigma^\#_r]$ has full measure with respect to $\mu_1$ and $\mu_2$.
\item Every $x\in N$ has finitely many pre-images in $\Sigma_r^\#$.
\end{enumerate} 
\end{thm-homo-class}

See Sections \ref{ss-symbolic} and \ref{ss-homoclinic} for the definitions of  topological Markov flow,
$\chi$--hyperbolicity, and homoclinic relation of measures.
Observe that the above theorem does not provide a single irreducible coding for the homoclinic class
of a hyperbolic measure as in \cite[Theorem 1.1]{BCL}, but it still provides interesting applications for 
geodesic flows over rank 1 manifolds on the uniqueness of equilibrium states, as we now explain. 

Let $M$ be a $C^\infty$ closed Riemannian manifold with nonpositive sectional curvature,
and let $\vf=\{\vf^t\}_{t\in\R}:T^1M\to T^1M$ be its geodesic flow. 
We say that $M$ is {\em rank $1$} if there is a vector $x\in T^1M$ without a parallel Jacobi field 
perpendicular to the flow direction. 
This assumption implies various geometrical and dynamical properties of $\vf$,
see Section \ref{ss-rank-1}. 
%The main result of this work gives a contribution in this direction.
Let ${\rm HC}(\mathcal O)$ denote the homoclinic class of 
the periodic orbit $\mathcal O$, see Section \ref{ss-homoclinic} for the definition.
The next theorem follows from classical results on the theory, but we have decided
to state it as a theorem to stress its importance in the viewpoint of homoclinic classes.

\begin{theorem}\label{thm-homo-class}
Let $\vf$ be the geodesic flow over a $C^\infty$ closed rank $1$ Riemannian manifold.
If $\mathcal O$ is a hyperbolic periodic orbit, then ${\rm HC}(\mathcal O)=T^1M$. 
\end{theorem}

Using the Main Theorem and Theorem \ref{thm-homo-class}, we give proofs, for rank 1 manifolds, of the theorem of Knieper on the uniqueness of the measure
of maximal entropy \cite{Knieper-Rank-One-Entropy} and theorems of Burns et al on the 
uniqueness of equilibrium states for some classes of potentials \cite{BCFT-2018}.
Let $\psi^u$ denote the geometric potential of $\vf$ and let ${\rm Sing}$ be the {\em singular set} of $\vf$,
see Section \ref{ss-rank-1} for the definitions.

\begin{theorem}[Theorem A of \cite{BCFT-2018}]\label{ThmA-BCFT}
Let $M$ be a $C^\infty$ closed rank $1$ Riemannian manifold, and let $\psi:T^1M\to\mathbb R$
be H\"older continuous or of the form $\psi=q\psi^u$ for $q\in\R$.
If $P({\rm Sing},\psi)<P(\psi)$, then $\psi$ has a unique equilibrium state $\mu$.
This measure is hyperbolic and fully supported. 
\end{theorem}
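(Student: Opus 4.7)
The plan is to combine the Main Theorem with the homoclinic class methods of \cite{BCS-Annals} (in their flow version as used in \cite{BCL}) to produce uniqueness, and to extract full support from the same framework.

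First I would reduce to the hyperbolic case. On a rank~$1$ manifold, every ergodic invariant probability of $\vf$ is either hyperbolic or supported on $\mathrm{Sing}$. Given an equilibrium state $\mu$ for $\psi$, decompose
\[
\mu=\alpha\,\mu_s+(1-\alpha)\,\mu_h
\]
according to whether ergodic components sit in $\mathrm{Sing}$ or not (renormalizing each piece to a probability). Affineness of $\nu\mapsto h_\nu(\vf)+\int\psi\,d\nu$ on invariant probabilities gives
\[
P(\psi)=\alpha\!\left(h_{\mu_s}(\vf)+\int\!\psi\,d\mu_s\right)+(1-\alpha)\!\left(h_{\mu_h}(\vf)+\int\!\psi\,d\mu_h\right).
\]
The first bracket is at most $P(\mathrm{Sing},\psi)$, which is strictly less than $P(\psi)$, so $\alpha=0$; hence $\mu$ is hyperbolic.

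Next I would code the hyperbolic part of $\vf$ by a countable Markov suspension, using the flow analogues of \cite{Sarig-JAMS,Ben-Ovadia-2019} and following the strategy of Theorem~3.1 of \cite{BCS-Annals}. The Main Theorem forces every two hyperbolic periodic orbits to be homoclinically related, since $\mathrm{HC}(\mathcal O)=T^1M$ for each such $\mathcal O$. This collapses the symbolic picture to a single irreducible component whose invariant measures lift every hyperbolic ergodic measure of $\vf$, in particular every ergodic component of $\mu$. The flow analogue of Corollary~3.3 of \cite{BCS-Annals} then yields at most one equilibrium state on this component for each admissible potential. Admissibility is immediate for H\"older $\psi$; for $\psi=q\psi^u$ it follows from H\"older regularity of $\psi^u$ on Pesin blocks combined with the fact that the symbolic Markov rectangles sit inside uniform blocks, in the spirit of \cite{BCFT-2018}. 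Uniqueness of $\mu$ follows. Full support is then a consequence of density of hyperbolic periodic orbits in $T^1M$ (which, by the Main Theorem, are all homoclinically related) together with positivity of $\mu$ on each symbolic cylinder.

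I expect the main obstacle to be verifying admissibility of the geometric potential on the symbolic side: since $\psi^u$ is only H\"older on Pesin blocks, one must show that the relevant H\"older constants can be controlled uniformly as the rectangles vary. This is essentially a symbolic rerun of the arguments of \cite{BCFT-2018} in the present framework, but it is where most of the technical work will be concentrated.
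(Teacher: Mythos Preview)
Your overall strategy matches the paper's: reduce to hyperbolic measures via the pressure gap, show they are homoclinically related, lift to a single irreducible countable Markov flow, and invoke Buzzi--Sarig uniqueness; full support then comes from full support of the symbolic equilibrium state together with $\mathrm{HC}(\mathcal O)=T^1M$. Two points of comparison are worth noting.

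First, the paper does not attempt to build one coding that lifts \emph{every} hyperbolic ergodic measure. The symbolic constructions are $\chi$--dependent, so one fixes two ergodic equilibrium states $\mu_1,\mu_2$, chooses $\chi>0$ small enough that both are $\chi$--hyperbolic, shows directly (using density of $W^{s/u}(\mathcal O)=\widehat W^{s/u}(\mathcal O)$ from the proof of the Main Theorem, plus the inclination lemma) that $\mu_1$ and $\mu_2$ are homoclinically related as \emph{measures}, and then applies the coding theorem (Theorem~\ref{thm-coding-irreducible}) tailored to a pair of homoclinically related $\chi$--hyperbolic measures. Your route through ``all hyperbolic periodic orbits are homoclinically related'' is morally the same but would still have to be reduced to this pairwise, fixed--$\chi$ statement.

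Second, and more substantively, your proposed handling of $\psi=q\psi^u$ via ``H\"older regularity of $\psi^u$ on Pesin blocks plus rectangles inside uniform blocks'' is not how the paper proceeds, and would be awkward to make precise (the Pesin--block H\"older constants degenerate as the block index grows, and the symbolic rectangles do not sit in a single block). The paper instead shows that $\underline v\mapsto E^u_{\pi_r(\underline v,0)}$ is H\"older on $\Sigma$ directly from the symbolic construction: the unstable direction at $\pi_r(\underline v,0)$ is the tangent to the $u$--admissible manifold $V^u[\underline v]$, and the graph--transform estimates of \cite{Ben-Ovadia-2019,ALP} give $C^1$ H\"older dependence of $V^u[\underline v]$ on $\underline v$. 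Composing with the smooth projection $T_x\Lambda\to X_x^\perp$ yields H\"older continuity of $\widehat\psi=\psi\circ\pi_r$. This is intrinsic to the coding and avoids any appeal to regularity of $\psi^u$ on the manifold side; it is exactly where the ``main obstacle'' you anticipated is resolved, but by a cleaner mechanism than the one you sketched.
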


Above, $P(\psi)$ and $P({\rm Sing},\psi)$ denote the topological pressure and topological pressure
restricted to ${\rm Sing}$ respectively.
We note that Theorem \ref{ThmA-BCFT} is not the full statement proved in \cite{BCFT-2018},
since we do not characterize $\mu$ as the weak--* limit of hyperbolic periodic orbits.
We also remark that $\mu$ is Bernoulli, due to \cite{LLS-2016} in dimension two and 
to \cite{Call-Thompson-2022,ALP} in any dimension.

For multiples of the geometric potential in surfaces, we also recover 
part of Theorem C of \cite{BCFT-2018}.

\begin{theorem}[Theorem C of \cite{BCFT-2018}]\label{ThmC-BCFT}
If $M$ is a $C^\infty$ closed rank $1$ surface, then its geodesic flow $\vf$ has a unique equilibrium state
$\mu_q$ for the potential $q\psi^u$ for each $q\in(-\infty,1)$. This measure
is hyperbolic and fully supported.
\end{theorem}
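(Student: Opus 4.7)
The plan is to deduce the theorem directly from Theorem \ref{ThmA-BCFT}, which reduces matters to verifying the pressure gap
\[
P({\rm Sing},q\psi^u)<P(q\psi^u)\quad\text{for every } q\in(-\infty,1).
\]
The starting observation is that on ${\rm Sing}$ every perpendicular Jacobi field is parallel, so the unstable Jacobian is trivial and $\psi^u\equiv 0$ on ${\rm Sing}$. Hence
\[
P({\rm Sing},q\psi^u)=h_{\rm top}(\vf|_{\rm Sing}) \qquad \text{for every } q\in\R.
\]
For a closed rank $1$ surface, ${\rm Sing}$ is a disjoint union of flat strips on which $\vf$ is a translation flow, so $h_{\rm top}(\vf|_{\rm Sing})=0$. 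The gap condition thus reduces to showing $P(q\psi^u)>0$ for every $q<1$.

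For $q\leq 0$ this is immediate: $\psi^u\leq 0$ forces $q\psi^u\geq 0$, and monotonicity of the pressure gives $P(q\psi^u)\geq P(0)=h_{\rm top}(\vf)$, which is strictly positive since the Main Theorem produces a hyperbolic periodic orbit. For $q\in(0,1)$ I would take $\mu$ to be the Liouville measure $\mu_L$, which is hyperbolic on a rank $1$ surface, and apply Pesin's entropy formula $h(\mu_L)=-\int\psi^u\,d\mu_L$ to estimate, via the variational principle,
\[
P(q\psi^u)\geq h(\mu_L)+q\int\psi^u\,d\mu_L=(1-q)\,h(\mu_L)>0.
\]
Thus the pressure gap holds on all of $(-\infty,1)$, and Theorem \ref{ThmA-BCFT} supplies a unique equilibrium state $\mu_q$, automatically hyperbolic and fully supported.

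The hard part will be controlling $P(q\psi^u)$ as $q\to 1^-$, since the Liouville bound $(1-q)h(\mu_L)$ degenerates at the endpoint --- this is precisely why the statement requires the strict inequality $q<1$. The two geometric inputs I would cite carefully are the hyperbolicity of $\mu_L$ on a rank $1$ surface (so that $h(\mu_L)>0$) and the vanishing $h_{\rm top}(\vf|_{\rm Sing})=0$ in the surface case; both are classical consequences of the geometry of rank $1$ surfaces but were not established in the preceding text. With these ingredients in place the argument is essentially a one-line application of Theorem \ref{ThmA-BCFT}.
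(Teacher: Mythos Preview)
Your proposal is correct and follows essentially the same route as the paper: reduce to Theorem~\ref{ThmA-BCFT} by checking the pressure gap, using $\psi^u\equiv 0$ on ${\rm Sing}$ together with the Liouville/Lebesgue measure and Pesin's formula to get $P(q\psi^u)\geq (1-q)h(\lambda)>0$. Two small remarks: the paper obtains $h_{\rm top}(\vf|_{\rm Sing})=0$ more directly via the Ruelle inequality (all Lyapunov exponents vanish on ${\rm Sing}$, hence every invariant measure there has zero entropy), which avoids your appeal to the flat-strip structure; and your case split $q\le 0$ versus $q\in(0,1)$ is unnecessary, since the Liouville bound $(1-q)h(\lambda)>0$ already covers all $q<1$ at once.
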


Again, we are not able to characterize $\mu$ as the weak--* limit of hyperbolic periodic orbits.
Finally, using the pressure gap $P({\rm Sing},0)<P(0)$ proved in \cite[Theorem B]{BCFT-2018},
we also recover Knieper's theorem on the uniqueness of the measure of maximal entropy.

\begin{theorem}[\cite{Knieper-Rank-One-Entropy}]\label{Thm-Knieper}
If $M$ is a $C^\infty$ closed rank $1$ Riemannian manifold, then its geodesic flow $\vf$
has a unique measure of maximal entropy. This measure
is hyperbolic and fully supported.
\end{theorem}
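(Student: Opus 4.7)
The plan is to deduce Theorem \ref{Thm-Knieper} as a direct corollary of Theorem \ref{ThmA-BCFT} applied to the zero potential $\psi\equiv 0$. The zero function is trivially H\"older continuous, so the only hypothesis of Theorem \ref{ThmA-BCFT} left to verify is the pressure gap $P({\rm Sing},0)<P(0)$. This inequality is precisely Theorem B of \cite{BCFT-2018}, which we invoke as a black box and do not attempt to reprove. Feeding this pressure gap into Theorem \ref{ThmA-BCFT} produces a unique equilibrium state $\mu$ for $\psi\equiv 0$, together with the statements that $\mu$ is hyperbolic and fully supported.

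The remaining step is the standard identification, via the variational principle, of equilibrium states of the zero potential with measures of maximal entropy: since $h_\mu(\vf)+\int 0\,d\mu = h_\mu(\vf)$ and $P(0)=h_{\rm top}(\vf)$, uniqueness of the equilibrium state translates word for word into uniqueness of the measure of maximal entropy. Hyperbolicity and full support are then inherited directly from Theorem \ref{ThmA-BCFT}, so nothing further needs to be said.

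There is essentially no new obstacle at this final stage: all the dynamical content sits upstream. The Main Theorem guarantees ${\rm HC}(\mathcal O)=T^1M$ for every hyperbolic periodic orbit $\mathcal O$, ruling out the possibility that distinct ergodic measures of maximal entropy could be distributed across different homoclinic classes, and the symbolic machinery of countable Markov partitions (used to prove Theorem \ref{ThmA-BCFT}) promotes this into genuine uniqueness on $T^1M$. The only external ingredient needed here beyond the previous theorems of the paper is the pressure gap for the zero potential from \cite{BCFT-2018}, which serves to rule out the trivial possibility that the equilibrium state is concentrated on ${\rm Sing}$.
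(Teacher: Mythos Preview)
Your proposal is correct and follows essentially the same route as the paper: reduce to Theorem \ref{ThmA-BCFT} with $\psi\equiv 0$, using the pressure gap $P({\rm Sing},0)<P(0)$ from \cite[Theorem B]{BCFT-2018}. The only difference is that the paper first treats the surface case separately, observing there that $P({\rm Sing},0)=0<h_{\rm top}(\vf)=P(0)$ directly (via Ruelle's inequality and zero exponents on ${\rm Sing}$), so that in dimension two the argument is self-contained and does not rely on \cite{BCFT-2018}; in higher dimension the paper invokes \cite[Theorem B]{BCFT-2018} exactly as you do.
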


Let us stress that, in dimension two, the pressure gap is automatic, hence we provide
a self-contained proof of Knieper's theorem in dimension two, using
homoclinic classes and symbolic dynamics. 
%In contrast to \cite{BCS-Annals},
%we do not use the dynamical Sard lemma, since invariant manifolds of geodesic
%flows over rank 1 manifolds already have the necessary transversality properties,
%see Section \ref{ss-rank-1}.

We believe the approach developed in this paper can be applied to a wider class
of geodesic flows. Indeed, our techniques have already been extended to geodesic
flows on uniform visibility manifolds without conjugate points and
continuous Green bundles \cite{Wu}. We also mention the recent work of
Mamani and Ruggiero \cite{Mamani-Ruggiero}.

\section{Preliminaries}

\subsection{Symbolic dynamics}\label{ss-symbolic}

Let $\mathfs G=(V,E)$ be an oriented graph, where $V,E$ are the vertex and edge sets.
We denote edges by $v\to w$, and assume that $V$ is countable.

\medskip
\noindent
{\sc Topological Markov shift (TMS):} It is a pair $(\Sigma,\sigma)$
where
$$
\Sigma:=\{\text{$\Z$--indexed paths on $\mathfs G$}\}=
\left\{\un{v}=\{v_n\}_{n\in\Z}\in V^{\Z}:v_n\to v_{n+1}, \forall n\in\Z\right\}
$$
is the symbolic space and $\sigma:\Sigma\to\Sigma$, $[\sigma(\un v)]_n=v_{n+1}$, is the {\em left shift}. 
We endow $\Sigma$ with the distance $d(\un v,\un w):={\rm exp}[-\inf\{|n|\in\Z:v_n\neq w_n\}]$.
The {\em regular set} of $\Sigma$ is
$$
\Sigma^\#:=\left\{\un v\in\Sigma:\exists v,w\in V\text{ s.t. }\begin{array}{l}v_n=v\text{ for infinitely many }n>0\\
v_n=w\text{ for infinitely many }n<0
\end{array}\right\}.
$$

\medskip
We will sometimes omit $\sigma$ from the definition, referring to $\Sigma$ as a TMS.
We only consider TMS that are \emph{locally compact}, i.e.
for all $v\in V$ the number of ingoing edges $u\to v$ and outgoing edges $v\to w$ is finite.

\medskip
Given $(\Sigma,\sigma)$ a TMS, let $r:\Sigma\to(0,+\infty)$ be a continuous function.
For $n\geq 0$, let
$r_n=r+r\circ\sigma+\cdots+r\circ \sigma^{n-1}$ be the $n$--th {\em Birkhoff sum} of $r$,
and extend this definition for $n<0$
in the unique way such that the {\em cocycle identity} holds: $r_{m+n}=r_m+r_n\circ\sigma^m$, $\forall m,n\in\Z$.

\medskip
\noindent
{\sc Topological Markov flow (TMF):} The TMF defined
by $(\Sigma,\sigma)$ and \emph{roof function} $r$ is the pair $(\Sigma_r,\sigma_r)$ where
$\Sigma_r:=\{(\un v,t):\un v\in\Sigma, 0\leq t<r(\un v)\}$
and $\sigma_r:\Sigma_r\to\Sigma_r$ is the flow on $\Sigma_r$ given by
$\sigma_r^t(\un v,t')=(\sigma^n(\un v),t'+t-r_n(\un v))$, where
$n$ is the unique integer such that $r_n(\un v)\leq t'+t<r_{n+1}(\un v)$.
We endow $\Sigma_r$ with a natural metric $d_r(\cdot,\cdot)$,
called the {\em Bowen-Walters metric}, such that $\sigma_r$ is a continuous flow \cite{Bowen-Walters-Metric}.
The {\em regular set} of $(\Sigma_r,\sigma_r)$ is $\Sigma_r^\#=\{(\un v,t)\in\Sigma_r:\un v\in \Sigma^\#\}$.

\medskip
Similarly, we will sometimes omit $\sigma_r$ and refer to $\Sigma_r$ as a TMF.
The roof functions we consider will always be H\"older continuous, in which case
$\exists\kappa,C>0$ such that $d_r(\sigma_r^t(z),\sigma_r^{t}(z'))\leq C d_r(z,z')^\kappa$
for all $|t|\leq 1$ and $z,z'\in\Sigma_r$, see \cite[Lemma 5.8]{Lima-Sarig}.

\medskip
\noindent
{\sc Irreducible component:}
If $\Sigma$ is a TMS defined by an oriented graph
$\mathfs{G}=(V,E)$, its \emph{irreducible components} are the subshifts $\Sigma'\subset\Sigma$
defined over maximal subsets $V'\subset V$ satisfying the following condition:
$$\forall v,w\in V',\;\exists \un v\in \Sigma \text{ and } n\geq 1\text{ such that } v_0=v \text{ and } v_n=w.$$
An {\em irreducible component} $\Sigma'_r$ of $\Sigma_r$ is a set of the form
$\Sigma'_r=\{(\un v,t)\in\Sigma_r:\un v\in \Sigma'\}$
where $\Sigma'$ is an irreducible component of $\Sigma$.

\subsection{Homoclinic classes}\label{ss-homoclinic}

Let $M$ be a $C^\infty$ closed Riemannian manifold and $T^1M$ be its unit tangent bundle,
which is also a $C^\infty$ closed Riemannian manifold.
Let $\vf=\{\vf^t\}_{t\in\R}:T^1M\to T^1M$
be the geodesic flow on $M$, and $X$ be the vector field generating this flow.

\medskip
\noindent
{\sc $\chi$--hyperbolic measure:} A $\vf$--invariant probability measure $\mu$ on $T^1M$
is {\em $\chi$--hyperbolic} if for $\mu$--a.e. $x\in T^1M$ all the Lyapunov exponents are greater than $\chi$
in absolute value, except for the zero exponent in the flow direction.

\medskip
Let $\mu$ be a hyperbolic $\vf$--invariant probability measure. For $\mu$--a.e.
$x\in T^1M$,
$$
W^{ss}(x)=\left\{y\in T^1M:\limsup_{t\to+\infty}\tfrac{1}{t}\log d(\vf^t(x),\vf^t(y))<0\right\}
$$
denotes the {\em strong stable manifold} of $x$ and 
$$
W^{s}(x)=\bigcup_{t\in\R}\vf^t[W^{ss}(x)]
$$
denotes the {\em stable manifold} of $x$. We define similarly $W^{uu}(x)$ and $W^{u}(x)$
the {\em strong unstable} and {\em unstable} manifolds of $x$. 

Given a hyperbolic periodic orbit $\mathcal O$, we let
$W^{s/u}(\mathcal O)=W^{s/u}(x)$ denote
the stable/unstable manifold of $\mathcal O$, for any $x\in\mathcal O$. 

\medskip
\noindent
{\sc Homoclinic class of hyperbolic periodic orbit:} The {\em homoclinic class}
of a hyperbolic periodic orbit $\mathcal O$ is the set
$$
{\rm HC}(\mathcal O)=\overline{W^u(\mathcal O)\pitchfork W^s(\mathcal O)}.
$$  

\medskip
To simplify the notation, we will sometimes write $N:=T^1M$.

\medskip
\noindent
{\sc Homoclinic relation of measures \cite{BCS-Annals,BCL}:} 
We say that two ergodic hyperbolic measures $\mu,\nu$ are
\emph{homoclinically related} if for $\mu$--a.e. $x$ and $\nu$--a.e. $y$
there exist transverse intersections $W^{s}(x)\pitchfork W^{u}(y)\ne\emptyset$
and  $W^{u}(x)\pitchfork W^{s}(y)\ne\emptyset$, i.e. points $z_1\in W^{s}(x)\cap W^{u}(y)$
and $z_2\in W^{u}(x)\cap W^{s}(y)$ such that
$T_{z_1}N=T_{z_1}W^{s}(x)+T_{z_1}W^{u}(y)$
and $T_{z_2}N=T_{z_2}W^{u}(x)+T_{z_2}W^{s}(y)$.

\medskip
The homoclinic relation is an equivalence relation among ergodic hyperbolic measures,
see \cite[Prop. 10.1]{BCL}.

\subsection{Geodesic flows in nonpositive curvature}\label{ss-rank-1}

The rank of a vector $x\in T^1M$ is the dimension of
the space of parallel Jacobi fields along the geodesic defined by $x$.
Assume that $M$ is a {\em rank $1$} manifold: it has nonpositive sectional curvature
and there is a vector with rank 1, i.e. without parallel Jacobi fields perpendicular to the flow direction.

\medskip
\noindent
{\sc Regular and Singular sets:} The {\em regular set} of $\vf$ is defined by
$$
{\rm Reg}=\{x\in T^1M:x\text{ has rank }1\}.
$$
The {\em singular set} of $\vf$ is defined by
$$
{\rm Sing}=T^1M\setminus {\rm Reg}=\{x\in T^1M:x\text{ has rank }>1\}.
$$

\medskip
The sets ${\rm Reg}$ and ${\rm Sing}$ form a partition of $T^1M$, with ${\rm Reg}$
open and ${\rm Sing}$ closed. Although we use the same notation, the regular set of $\vf$
and the regular set of $\Sigma$ are not related; we maintain the notation because they 
are classical in their contexts.

Geodesic flows on rank 1 manifolds have (weak)
invariant directions called {\em Green bundles}, and invariant manifolds called {\em horospherical foliations}
at every point, which satisfy various properties summarized as follows.

 \begin{proposition}\label{prop.green.bundles}
There are continuous $d\vf$--invariant bundles $x\in T^1M\to E^{s/u}_x$
and continuous foliations $\{\mathcal F^{ss/uu}(x):x\in T^1M\}$ tangent to 
$E^{s/u}$ and invariant under $\vf$ satisfying the following properties:
\begin{enumerate}[{\rm (1)}]
\item $E^s,E^u$ are orthogonal to $X$ and ${\rm dim}(E^s)={\rm dim}(E^u)={\rm dim}(M)-1$.
\item $x\in {\rm Reg}$  if and only if $E^s_x\oplus \langle X_x\rangle\oplus E^u_x=T_x(T^1M)$.
\item ${\rm Reg}$ is dense in $T^1M$.
\item $\mathcal F^{s/u}(x):=\bigcup_{t\in\R}\mathcal F^{ss/uu}(\vf^t(x))$ is a $\vf$--invariant
connected manifold of dimension ${\rm dim}(M)$ and tangent to $E^{s/u}\oplus\langle X\rangle$.
\item There is a universal constant $C=C(M)$ such that if $d_x$
is the induced distance on $\mathcal F^s(x)$ then $d_{x}(\vf^t(x),\vf^t(y))\leq C d_x(x,y)$
for all $y\in \mathcal F^s(x)$ and all $t\geq 0$; a similar statement holds for $\mathcal F^u(x)$ and $t\leq 0$.
\item $\mathcal F^{s/u}(x)$ is dense in $T^1M$ for every $x\in T^1M$.
\end{enumerate}
\end{proposition}

The definitions of the Green bundles and horospherical foliations require a discussion on Jacobi fields
and Busemann functions; since these objects will not be further used in this paper, we refer the reader
to \cite[Section VI]{Eberlein} or \cite[Section 2.3]{Knieper-Handbook-Chapter}.
The proofs of (1)--(5) above can be found in \cite{Eberlein} and of (6) in \cite{Ballmann}.
For an orbit $\mathcal O$, define 
$\mathcal F^{s/u}(\mathcal{O}):=\mathcal F^{s/u}(x)$ for any $x\in\mathcal O$.
We also recall the definition of the geometric potential.

\medskip
\noindent
{\sc The geometric potential of $\vf$ \cite{Bowen-Ruelle-SRB}:}  The {\em geometric
potential} of $\vf$ is the function $\psi^u:T^1M\to\R$ defined as
$$
\psi^u(x)=-\tfrac{d}{dt}\Big|_{t=0}\log \det (d\vf^t_x|_{E^u_x})=-\lim_{t\to 0}\tfrac{1}{t}\log\det(d\vf^t_x|_{E^u_x}).
$$

\section{Proof of Main Theorem}

Let $X$ be a $C^{1+\beta}$ vector field on $N$ with $X\neq 0$ everywhere, and let
$\vf:N\to N$ be the flow generated by $X$. Fix $\chi>0$ and let $\mu_1,\mu_2$ be
homoclinically related $\chi$--hyperbolic probability measures.
If $N$ has dimension three, then we can apply \cite[Theorem 1.1]{BCL} directly.
Since this latter theorem is not available in higher dimension, we have to argue 
differently. For that, we combine techniques of \cite{Lima-Sarig,ALP,BCS-Annals,BCL}. 
Let $\mathcal O_1,\mathcal O_2,\ldots$ be the $\chi$--hyperbolic periodic orbits
homoclinically related to $\mu_1$ (and hence to $\mu_2$).\\

\noindent
\medskip
{\sc Claim:} There is a global Poincaré section $\Lambda$ such that if $f:\Lambda\to\Lambda$ is
the Poincaré return map and $\mathfs S$ is the {\em singular set} of $f$, then:
\begin{enumerate}[(1)]
\item $\Lambda$ is {\em adapted} for $\mu_1$ and $\mu_2$, i.e. 
\begin{equation}\label{subexp-convergence}
\lim_{|t|\to\infty}\tfrac{1}{t}\log d(\vf^t(x),\mathfs S)=0
\end{equation}
for $\mu_i$--a.e. $x\in \Lambda$, $i=1,2$.\footnote{This is equivalent to the projection of $\mu_i$ to $\Lambda$
being adapted in the notation of \cite{Lima-Sarig}.}
\item For every $n$, there exists a compact, $\vf$--invariant, transitive,
locally maximal, $\chi$--hyperbolic set $K_n$ that contains $\mathcal O_1,\ldots,\mathcal O_n$
and such that $K_n\cap \mathfs S=\emptyset$.
\end{enumerate}

\medskip
The singular set is $\mathfs S=\{x\in\Lambda:\{x,f(x),f^{-1}(x)\}\cap \partial\Lambda\neq\emptyset\}$.
%\mathfs S=\left\{x\in\Lambda:\begin{array}{l}
%x\text{ does not have a neighborhood }V\subset\Lambda\setminus\partial\Lambda\\
%\text{diffeomorphic to an open disc s.t. }
%f|_{V}:V\to f(V)\\
%\text{and }f^{-1}|_{V}:V\to f^{-1}(V)\text{ are diffeomorphisms}
%\end{array}
%\right\}.
\begin{proof}[Proof of the claim.]
Fix $x_i\in\mathcal O_i$ and $z_{ij}\in W^u(x_i)\pitchfork W^s(x_j)$, and let 
$\tau_{ij}\in\R$ such that $z_{ij}\in W^{uu}(x_i)\cap W^{ss}(\vf^{\tau_{ij}}(x_j))$. 
We construct a one-parameter family of global Poincaré sections $\Lambda_r$, where
$r$ varies in an interval $[a,b]$ and each $\Lambda_r$ is the union of finitely many codimension
1 disjoint balls $D_1(r),\ldots,D_k(r)\subset N$ centered at points $y_1,\ldots,y_k\in N$, each of them 
with radius $r$ and almost orthogonal to $X$. The details of the construction can be found in
\cite[Section 2]{Lima-Sarig} for three-dimensional flows and in \cite[Section 10]{ALP} for any dimension.
We can assume that the roof functions of the Poincaré return maps to $\Lambda_r$
are all larger than some $\ve_0>0$.
As the radius $r$ varies, the boundary of $\Lambda_r$ varies as well. 
Applying a double counting argument and the Borel-Cantelli
lemma, for Lebesgue almost every choice $r\in[a,b]$ the section $\Lambda_r$ satisfies (1) above, see 
\cite[Theorem 2.8]{Lima-Sarig} for details.
Hence we focus on showing that (2) holds in a set of parameters $r\in [a,b]$ of positive measure.
We will prove this using parameter selection.
Observe that if $K$ is $\vf$--invariant, then $K\cap\mathfs S=\emptyset$ if and only if $K\cap \partial \Lambda_r=\emptyset$.

Write $z_{ii}=x_i$, and let $\mathcal O_{ij}$
denote the orbit of $z_{ij}$. Consider the countable union of orbits
$\bigcup_{i,j}\mathcal O_{ij}$. Clearly, the set of parameters
$r\in[a,b]$ such that $\left(\bigcup_{i,j}\mathcal O_{ij}\right)\cap\partial\Lambda_r\neq\emptyset$
is countable. Below, we take $r$ in the complement of this set.
We use the following notation: given two functions $g(\delta),h(\delta)$,
write $g=O_n(h)$ if there is a constant $C>0$ that depends on $n$ and $\delta_0>0$ 
such that $|g(\delta)|\leq C|h(\delta)|$, $\forall|\delta|<\delta_0$.

Fix $n$ and $\delta>0$. In the sequel, we construct a $\delta$--neighborhood 
of $\bigcup_{1\leq i,j\leq n}\mathcal O_{ij}$. 
Let $C=C(n)>0$ such that
$$
\begin{array}{r}
d(\vf^{-t}(x_i),\vf^{-t}(z_{ij}))\leq Ce^{-\chi t}\\
d(\vf^{t+\tau_{ij}}(x_j),\vf^{t}(z_{ij}))\leq Ce^{-\chi t}
\end{array}, \ \ \forall 1\leq i,j\leq n,\forall t\geq 0.
$$
Let $t_{ij}=O_n(|\log\delta|)$ positive
such that $\vf^{-t_{ij}}(z_{ij})\in B_\delta(\mathcal O_i)$ and $\vf^{t_{ij}}(z_{ij})\in B_\delta(\mathcal O_j)$.
The set 
$$
Z_n=\bigcup_{1\leq i\leq n}\mathcal O_i\cup \bigcup_{1\leq i,j\leq n}\{\vf^t(z_{ij}):|t|\leq t_{ij}\}
$$
is the union of finitely many pieces of orbits and $\bigcup_{1\leq i,j\leq n}\mathcal O_{ij}\subset B_\delta(Z_n)$.
Arguing as in \cite[Lemma 3.11]{BCS-Annals}, we can construct inside $B_\delta(Z_n)$
a compact, $\vf$--invariant, transitive, locally maximal, $\chi$--hyperbolic set that contains
$\mathcal O_1,\ldots,\mathcal O_n$. Thus, we estimate the measure of the
set of $r\in[a,b]$ such that $B_\delta(Z_n)\cap\partial \Lambda_r\neq\emptyset$.

Denoting by $|\mathcal O_i|$ the length of $\mathcal O_i$, the total length of $Z_n$ is
$$
\sum_{1\leq i\leq n}|\mathcal O_i|+\sum_{1\leq i,j\leq n} 2t_{ij}=O_{n}(|\log\delta|).
$$
Recalling that $\ve_0$ is a lower bound for the roof function of $\Lambda_r$, the
set $Z_n\cap \Lambda_r$ has at most $\tfrac{1}{\ve_0}O_{n}(|\log\delta|)=O_n(|\log\delta|)$ elements.
Then the intersection
$B_\delta(Z_n)\cap\Lambda_b$ is contained in $O_n(|\log\delta|)$ balls of radius $2\delta$.
The set of parameters ${\rm Bad}(n)=\{r\in [a,b]:B_\delta(Z_n)\cap\partial\Lambda_r\neq\emptyset\}$
is thus contained in the union of $O_n(|\log\delta|)$ intervals of length $4\delta$, and so has
Lebesgue measure $O_n(\delta|\log\delta|)$. Since $\lim_{\delta\to 0}\delta\log\delta=0$,
we can take $\delta_n$ such that ${\rm Leb}[{\rm Bad}(n)]<\ve/n^2$ for small $\ve>0$
and so the complement $[a,b]\backslash\bigcup_{n\geq 1} {\rm Bad}(n)$ has positive Lebesgue measure.

In summary, we can choose $r\in[a,b]$ such that:
\begin{enumerate}[$\circ$]
\item Condition (\ref{subexp-convergence}) is satisfied: the space of $r\in[a,b]$ satisfying
it has full measure.
\item $\left(\bigcup_{i,j}\mathcal O_{ij}\right)\cap\partial\Lambda_r=\emptyset$:
the space of $r\in[a,b]$ satisfying it is the complement of a countable set, hence
has full measure.
\item $B_{\delta_n}(Z_n)\cap\partial \Lambda_r=\emptyset$ for all $n\geq 1$: 
the space of $r\in[a,b]$ satisfying it has positive measure.
\end{enumerate}
This concludes the proof of the claim.
\end{proof}

Once the section $\Lambda=\Lambda_r$ is chosen, apply \cite[Theorem 10.1]{ALP}
to construct a TMF $(\Sigma_r,\sigma_r)$ and
a H\"older continuous map $\pi_r:\Sigma_r\to N$ satisfying (1)--(4) in Theorem \ref{thm-coding-irreducible},
with the exception that $\Sigma_r$ might not be irreducible.
Since $K_n\cap \mathfs S=\emptyset$ and $K_n$ is compact and $\chi$--hyperbolic,
\cite{ALP} implies that $K_n\subset \pi_r[\Sigma_r]$. Since $K_n$ is uniformly hyperbolic,
we actually have $K_n\subset \pi_r[\Sigma_r^\#]$.

The final step is to find an irreducible component $\Sigma'_r$ of $\Sigma_r$
that lifts both $\mu_1$ and $\mu_2$. For that, we proceed as in \cite[Lemma 10.4]{BCL}:
\begin{enumerate}[$\circ$]
\item For each $n\geq 1$, there is an invariant, compact, transitive set $X_n\subset\Sigma_r^\#$ that lifts $K_n$.
\item Since $\mathcal O_1$ has finitely many lifts in $\Sigma_r^\#$, the sequence $X_1,X_2,\ldots$
has a subsequence $X_{n_1},X_{n_2},\ldots$ containing a same lift $(\un v,t)$ of $\mathcal O_1$.
Take $\Sigma_r'$ to be the irreducible component of $(\un v,t)$. Then $\mathcal O_1,\ldots$ all
lift to periodic orbits in $\Sigma_r'$. 
\end{enumerate}
Now proceed as in \cite[Theorem 1.1]{BCL} to lift generic points for $\mu_1$ and $\mu_2$ to $\Sigma'_r$. 
This completes the proof of the Main Theorem.

\section{Applications to geodesic flows on rank 1 manifolds}

Let $M$ be a rank 1 manifold. In this section, we prove Theorems \ref{thm-homo-class}--\ref{Thm-Knieper}.
We will use a classical result of Ballmann, Brin and Eberlein \cite[Prop. 3.10]{BBE-Annals}.
Call $x\in T^1M$ {\em uniformly recurrent} if for every neighborhood $U\subset T^1M$ of $x$ it holds
$$
\liminf_{T\to\infty}\frac{1}{T}\int_0^T 1_{U}(\vf^t(x))dt>0,
$$
where $1_U$ is the characteristic function of $U$. 
By the Birkhoff ergodic theorem, if $\mu$ is $\vf$--invariant then 
$\mu$--a.e. $x$ is uniformly recurrent.

\begin{lemma}[\cite{BBE-Annals}]\label{lemma-BBE}
If $x\in{\rm Reg}$ is uniformly recurrent then $W^{s/u}(x)=\mathfs F^{s/u}(x)$.
In particular, if $\mathcal O$ is a hyperbolic periodic orbit then
$W^{s/u}(\mathcal{O})=\mathcal F^{s/u}(\mathcal O)$.
\end{lemma}

\begin{proof}[Proof of Theorem \ref{thm-homo-class}]
Let $\mathcal O$ be a hyperbolic periodic orbit.
By Lemma \ref{lemma-BBE} and Proposition~\ref{prop.green.bundles}(6), we conclude that 
$W^{s/u}(\mathcal O)=\mathcal F^{s/u}(\mathcal O)$ is dense in $T^1M$.
Fix $y\in {\rm Reg}$, and let $U\subset{\rm Reg}$ be an open set containing $y$.
%Let $\ve,\delta>0$ small such that:
%\begin{enumerate}[$\circ$]
%\item For every $z\in B_\ve(y)$, $\mathcal F^{s/u}(z)$ has internal radius at $z$ larger than $\delta$.
%\item If $N^s,N^u$ are submanifolds tangent to $E^s\oplus \langle X\rangle, E^u\oplus \langle X\rangle$ 
%respectively and if there is $z_{s/u}\in N^{s/u}\cap B_\ve(y)$ such that $N^{s/u}$ has internal
%radius at $z_{s/u}$ larger than $\delta$, then $N^s,N^u$
%have a transverse intersection at some $z\in U$.
%\end{enumerate}
%%Above, $\mathcal F^{cs/cu}(z)=\mathcal F^{s/u}(\mathcal O_z)$ where $\mathcal O_z$ is the orbit of $z$.
%The parameter $\ve>0$ exists because $E^{s/u}$ varies continuously and
%$E^s_y\oplus\langle X_y\rangle\oplus E^u_y=T_y(T^1M)$.
By Proposition~\ref{prop.green.bundles}(2) and (4), we have that $y\in \mathcal{F}^{s}(y)\pitchfork  \mathcal{F}^{u}(y)$. By the continuity of $\mathcal F^{s/u}$, there is $\varepsilon>0$ such that if $z_s,z_u\in B_\varepsilon(y)$ then   $(\mathcal{F}^{s}(z_s)\pitchfork  \mathcal{F}^{u}(z_u))\cap U\neq \emptyset$. 

Since $W^{s/u}(\mathcal O)$ is dense, there is $z_{s/u}\in W^{s/u}(\mathcal O)\cap B_\ve(y)$.
In particular, we have $W^{s/u}(\mathcal O)=\mathcal{F}^{s/u}(z_{s/u})$. By the choice of $\ve$,
it follows that $W^u(\mathcal{O}) \pitchfork W^s(\mathcal O)$
at some point $z\in U$. This proves that $y\in {\rm HC}(\mathcal O)$. Since $y\in{\rm Reg}$ is arbitrary,
${\rm HC}(\mathcal O)\supset {\rm Reg}$. Finally, since ${\rm Reg}$ is dense in $T^1M$,
we get that ${\rm HC}(\mathcal O)=T^1M$.
\end{proof}

\begin{proof}[Proof of Theorem \ref{ThmA-BCFT}]
Let $\psi$ be H\"older continuous or of the form $\psi=q\psi^u$ with $q\in\R$. 
In particular, $\psi$ is continuous, hence
the existence of an equilibrium state is guaranteed by the $C^\infty$ regularity
of $\vf$ \cite{Newhouse-Entropy}.
For the uniqueness, let $\mu_1,\mu_2$ be two ergodic equilibrium states. Since
$P({\rm Sing},\psi)<P(\psi)$, we have $\mu_1({\rm Reg})=\mu_2({\rm Reg})=1$
and so $\mu_1,\mu_2$
are hyperbolic. Since $\mu_1,\mu_2$ are ergodic, we can take $\chi>0$ small so that $\mu_1,\mu_2$
are $\chi$--hyperbolic. We claim that $\mu_1$ and $\mu_2$ are homoclinically 
related. To see this, let $y_1,y_2\in{\rm Reg}$ be uniformly recurrent. Since $W^{s/u}(y_i)=\mathcal F^{s/u}(y_i)$ is dense in $T^1M$
(Lemma \ref{lemma-BBE} and Proposition~\ref{prop.green.bundles}(6)), the same argument in the proof of Theorem~\ref{thm-homo-class} shows that $W^{u}_{\rm loc}(y_1)\pitchfork W^{s}(y_2)\neq\emptyset$.
Regular uniformly recurrent points are generic for $\mu_1,\mu_2$, hence the claim follows. 

%By \cite{ALP}, we can construct a section $\Lambda$ such that 
%$$
%\lim\tfrac{1}{n}\log d(f^n(x),\mathfs S)=0
%$$
%for $\nu_i$--a.e. $x\in \Lambda$ for $i=1,2$, where $f:\Lambda\to\Lambda$ is the Poincaré return map
%and $\nu_i$ is the $f$--invariant probability measure on $\Lambda$ associated to $\mu_i$.
%Applying Theorem \ref{thm-coding-irreducible} to $\mu_1$, we get a H\"older continuous coding
%$\pi_r:\Sigma_r\to T^1M$ such that $\mu_i[\pi_r(\Sigma_r)]=1$ for $i=1,2$. 
Applying Theorem \ref{thm-coding-irreducible} to $\mu_1$ and $\mu_2$, we get
an irreducible TMF $(\Sigma_r,\sigma_r)$ and a Hölder continuous map $\pi_r:\Sigma_r\to T^1M$
such that $\mu_i[\pi_r(\Sigma_r^\#)]=1$ for $i=1,2$. Therefore,
$\mu_1,\mu_2$ lift to ergodic measures $\widehat\mu_1,\widehat\mu_2$ on $\Sigma_r$. These measures are equilibrium states of the potential $\widehat\psi=\psi\circ \pi_r$. 

\medskip
\noindent
{\sc Claim:} $\wh\psi$ is Hölder continuous. 

\begin{proof}[Proof of the claim.]
When $\psi$ is H\"older continuous,
$\widehat\psi=\psi\circ \pi_r$ is the composition of two H\"older maps, hence Hölder continuous.

When $\psi=q\psi^u$ for some $q\in\R$, we show that
$(\un v,t)\in\Sigma_r\mapsto E^u_{\pi_r(\un v,t)}$ is Hölder continuous, which obviously implies
that $\wh\psi$ is Hölder continuous. It is enough to show that $\un v\in\Sigma\mapsto E^u_{\pi_r(\un v,0)}$
is Hölder continuous, since by $d\vf$--invariance this implies that $(\un v,t)\in\Sigma_r\mapsto E^u_{\pi_r(\un v,t)}$
is Hölder continuous.

As in the proof of the Main Theorem, let $f:\Lambda\to\Lambda$ be the Poincaré
return map. The map 
$\mathfs F:\un v\in \Sigma\to F^u_x\subset T_x\Lambda$, where
$F^u_x$ is the unstable direction for $f$ at $x=\pi_r(\un v,0)$, is H\"older continuous 
\cite[Proposition 7.7]{ALP}.
%As a matter of fact, employing the notation of \cite{ALP},
%$F^u_x$ is the tangent direction of the $u$--admissible graph
%$V^u[\un v]=\lim_{n\to \infty}(\mathfs F_{-1}\circ\cdots\circ \mathfs F_{-n})(V_{-n})$, where
%$\mathfs F_{-n}$ is the unstable graph transform defined by the edge $v_n\to v_{n+1}$
%and $V_{-n}$ is any $u$--admissible graph at $v_{-n}$, 
%see \cite[Section 4.5]{ALP}.\footnote{Actually, the coding $\Sigma$ is obtained from
%a refinement of a preliminary coding. The sequences on $\Sigma$ lift to sequences on this 
%preliminary coding, whose vertices are double Pesin charts and whose edges
%define stable/unstable graph transforms.}
%Therefore, if two sequences $\un v,\un w$ are close then $V^u[\un v],V^u[\un w]$ are $C^1$--close.
%For the details, see \cite[Proposition. 3.12(5)]{Ben-Ovadia-2019}, whose proof
%applies verbatim in the context of \cite{ALP}.
Now let
$\mathfrak p_x:T_x\Lambda\to X_x^\perp$ be the orthogonal projection. 
Such map exists and is an isomorphism because 
both $X_x^\perp,T_x\Lambda$ have dimension ${\rm dim}(T^1M)-1$
and $T_x\Lambda$ is almost orthogonal to $X_x$. We have 
$E^u_x=\mathfrak p_x(F^u_x)$.  
Since $x\in T^1M\mapsto X_x^\perp$ and $x\in \Lambda\mapsto T_x\Lambda$ are $C^\infty$,
the map $\mathfs P:x\in \Lambda\mapsto\mathfrak p_x$ is $C^\infty$. Therefore
$\un v\in\Sigma\mapsto E^u_{\pi_r(\un v,0)}$, being the composition $\mathfs P\circ \mathfs F$, 
is Hölder continuous.
\end{proof}

The measures $\widehat\mu_1,\widehat\mu_2$ project to ergodic $\sigma$--invariant
probability measures $\widehat\nu_1,\widehat\nu_2$ on the irreducible component $\Sigma$
which are equilibrium states
of the H\"older continuous potential $\widehat\psi_r-P_{\rm top}(\psi)r$ where
$\widehat\psi_r(\un v)=\int_0^{r(\un v)}\wh\psi(\un v,t)dt$, see e.g. \cite[Proposition 6.1]{Parry-Pollicott-Asterisque}.
By \cite[Theorem 1.1]{Buzzi-Sarig}, we conclude that $\widehat\nu_1=\widehat\nu_2$,
and so $\mu_1=\mu_2$.

Finally, we show that the unique equilibrium state $\mu$ is fully supported. The proof is the same
of \cite[Corollary 3.3]{BCS-Annals}. Using the same notation of
the previous paragraphs, $\wh\nu$ has full support in $\Sigma$ by \cite{Buzzi-Sarig} and so
$\wh\mu$ has full support in $\Sigma_r$. This implies that ${\rm supp}(\mu)=\overline{\pi_r(\Sigma_r)}$, 
so it is enough to show that $\pi_r(\Sigma_r)$ is dense in $T^1M$. Let $\mathcal O$ be a hyperbolic
periodic orbit homoclinically related to $\mu$.\footnote{The existence of this orbit is consequence of
Katok's horseshoe theorem for flows; another way to obtain this is by the symbolic coding of \cite{ALP}.}
We thus have ${\rm supp}(\mu)\subset {\rm HC}(\mathcal O)=T^1M$, hence it is enough to show that
$\pi_r(\Sigma_r)$ is dense in $\{W^u(\mathcal O)\pitchfork W^s(\mathcal O)\}$. 
The proof of this fact in \cite[Corollary 3.3]{BCS-Annals} uses \cite[Proposition 3.7]{BCS-Annals},
which works equally well in our context. Hence we conclude that 
${\rm supp}(\mu)=\overline{\pi_r(\Sigma_r)}={\rm HC}(\mathcal O)=T^1M$.
\end{proof}

\begin{remark}
When $M$ is a surface and $\psi=q\psi^u$, \cite{LLS-2016} can be used to prove uniqueness
without using the Hölder continuity of $\wh\psi$. In this cited work it is proved that $\wh\psi_r$ above
is cohomologous to the Hölder continuous function 
$\un v\in\Sigma\mapsto -\log\det(df|_{F^u_{\pi_r(\un v,0)}})$,
 see \cite[Lemmas 8.1 and 8.2]{LLS-2016}.
We note that the same proof applies in any dimension.
\end{remark}

\begin{proof}[Proof of Theorem \ref{ThmC-BCFT}]
The proof is the same of \cite[Theorem C]{BCFT-2018}. We include it for
completeness.
Let $M$ be a surface and $\psi=q\psi^u$ for $q<1$.
Inside ${\rm Sing}$ all Lyapunov exponents are zero. Let $\mu$ be a $\vf$--invariant measure 
supported on ${\rm Sing}$. By the Ruelle inequality, $h(\mu)=0$.
Also, since $\psi^u=0$ on ${\rm Sing}$, it follows that
$$
h(\mu)+q\int \psi^u d\mu = 0.
$$ 
Therefore $P({\rm Sing},\psi)=0$.
On the other hand, if $\lambda$ is the Lebesgue measure, then by Pesin equality
$$
h(\lambda)+q\int\psi^u d\mu = (1-q)h(\lambda)>0.
$$
Therefore $P(\psi)>0=P({\rm Sing},\psi)$.
Now apply Theorem \ref{ThmA-BCFT}.
\end{proof}

\begin{proof}[Proof of Theorem \ref{Thm-Knieper}]
When $M$ is a surface we have, as in the previous proof, that
$P({\rm Sing},0)=0$. Since $P(0)=h_{\rm top}(\vf)>0$, we get the pressure gap $P({\rm Sing},0)<P(0)$.
Applying Theorem \ref{ThmA-BCFT}, we conclude the
uniqueness of the measure of maximal entropy. In higher dimension,
we invoke \cite[Theorem B]{BCFT-2018} to get the pressure gap $P({\rm Sing},0)<P(0)$.
Again, applying Theorem \ref{ThmA-BCFT}, we obtain the
uniqueness of the measure of maximal entropy.
\end{proof}

\bibliographystyle{alpha}
\bibliography{bibliography}{}

\end{document}